\theoremstyle{plain}
\newtheorem{thm}{Theorem}[section]
\newtheorem{cor}[thm]{Corollary}
\newtheorem{lem}[thm]{Lemma}
\theoremstyle{definition}
\newtheorem{rem}[thm]{Remark}
\newcommand{\bC}{{\mathbb{C}}}
\newcommand{\A}{{\mathcal{A}}}
\newcommand{\B}{{\mathcal{B}}}
\renewcommand{\H}{{\mathcal{H}}}
\newcommand{\K}{{\mathcal{K}}}
\newcommand{\M}{{\mathcal{M}}}
\renewcommand{\S}{{\mathcal{S}}}
\newcommand{\rC}{{\mathrm{C}}}
\newcommand{\ep}{\varepsilon}
\renewcommand{\phi}{\varphi}
\newcommand{\fA}{{\mathfrak{A}}}
\newcommand{\fB}{{\mathfrak{B}}}
\newcommand{\qand}{\quad\text{and}\quad}
\newcommand{\qfor}{\quad\text{for}\ }
\newcommand{\qforal}{\quad\text{for all}\ }
\newcommand{\AND}{\text{ and }}
\newcommand{\cconv}{\overline{\operatorname{conv}}}
\newcommand{\spn}{\operatorname{span}}
\newcommand{\CP}{\operatorname{CP}}
\newcommand{\UCP}{\operatorname{UCP}}
\newcommand{\ca}{\mathrm{C}^*}
\newcommand{\cenv}{\mathrm{C}^*_{\text{env}}}
\newcommand{\ol}{\overline}
\newcommand{\ucp}{u.c.p.\ }
\newcommand{\cp}{c.p.\ }
\newcommand{\cc}{c.c.\ }
\begin{document}
\title[The Choquet boundary]{The Choquet boundary of\\ an operator system}

\author[K.R. Davidson]{Kenneth R. Davidson}
\address{Department of Pure Mathematics\\University of Waterloo\\
Waterloo, ON\; N2L 3G1 \\ Canada}
\email{krdavids@math.uwaterloo.ca}

\author[M. Kennedy]{Matthew Kennedy}
\address{Department of Mathematics and Statistics\\ Carleton University\\
Ottawa, ON \; K1S 5B6 \\Canada}
\email{mkennedy@math.carleton.ca}

\begin{abstract}
We show that every operator system (and hence every unital operator algebra) 
has sufficiently many boundary representations to generate the C*-envelope.
\end{abstract}

\subjclass[2010]{46L07, 46L52, 47A20, 47L55}
\keywords{dilations, operator system, boundary representation, unique extension property,
completely positive maps, Choquet boundary}
\thanks{Both authors partially supported by research grants from NSERC (Canada).}
\maketitle

We solve a 45 year old problem of William Arveson that is central to his approach
to non-commutative dilation theory. We show that every operator system and every
unital operator algebra has sufficiently many boundary representations to completely
norm it. Thus the C*-algebra generated by the image of the direct sum of these maps
is the C*-envelope. This was a central problem left open in Arveson's seminal work
\cite{Arv1969} on dilation theory for arbitrary operator algebras. 
In the intervening years, the existence of the C*-envelope was established,
but a general argument producing boundary representations has not been available.

Arveson  \cite{Arv1969, Arv1972} reformulated the classical dilation theory of Sz. Nagy \cite{SzNF}
so that it made sense for an arbitrary unital closed subalgebra $\A$ of a C*-algebra.
A central theme was the use of completely positive and completely bounded maps.
He proposed the existence of a family of special representations of $\A$, called
\textit{boundary representations}, which have unique
completely positive extensions to $\ca(\A)$ that are \textit{irreducible}
$*$-representations. The set of boundary representations is a noncommutative analogue of the
Choquet boundary of a function algebra, i.e. the set of points with unique representing measures.
Arveson proposed that there should be sufficiently many boundary representations, so that their
direct sum recovers the norm on $\M_n(\A)$ for all $n\ge1$. In this case, he showed that the
C*-algebra generated by this direct sum enjoys an important universal property, and provides 
a realization of the \textit{C*-envelope} of $\A$.

Arveson was not able to prove the existence of boundary representations in general,
although in various concrete cases they can be exhibited. Consequently, he was also unable
to prove the existence of the C*-envelope. However, a decade later, Hamana \cite{Ham}
established the existence of the C*-envelope using other methods. His proof, via the construction of a
minimal injective operator system containing $\A+\A^*$, did little to answer questions
about boundary representations. Nevertheless, it did lead to a variety of cases in which
the C*-envelope can be explicitly described. (We will not review the extensive literature
on this topic.) 

Nearly 20 years later, Muhly and Solel \cite{MuhlySolel} showed that boundary representations
(and more generally, $*$-representations that factor through the C*-envelope) have
homological properties that distinguish them from other representations. However, since their argument
relied on Hamana's theorem, it did not lead to a new construction of the C*-envelope.

About a decade ago, Dritschel and McCullough \cite{DritMcCull} came
up with an exciting new proof of the existence of the C*-envelope. It was a bona fide dilation
argument, building on ideas of Agler \cite{Agler}, and introduced the idea of
\textit{maximal dilations}. This direct dilation theory approach had the following important
consequence: if you begin with a completely isometric representation of $\A$, and find a maximal
dilation, then the C*-algebra generated by the image of this dilation is the C*-envelope. 
Consequently, there has been considerable interest in maximal dilations.

Arveson \cite{Arv2008} revisited the problem of the existence of boundary representations using
the ideas of Dritschel and McCullough. Using the disintegration theory of representations of C*-algebras,
he established that, in the separable case, sufficiently many boundary representations exist.
He expressed regret at the time that these delicate measure-theoretic 
methods appeared to be necessary---but reminded the audience  he had been looking for 
\textit{any} way of doing it for nearly 40 years\footnote{At the Fields Institute in Toronto, July, 2007,
in response to a question from Richard Kadison.}.

It is therefore of interest that our proof is a direct dilation-theoretic argument, building on ideas
from Arveson's original 1969 paper, and the more recent work of Dritschel and McCullough. 
In particular, our arguments do not require any disintegration theory nor do they require separability.

Arveson observed in his original work that a completely contractive unital map of
$\A$ into $\B(\H)$ extends uniquely to a self-adjoint map on the operator system
$\S=\ol{\A+\A^*}$ which is unital and completely positive. Consequently, he formulated much of
his theory around dilations of completely positive maps of operator systems.
We also work in this more general setting. 

Arveson developed many other important ideas in his seminal paper. 
One example which is particularly relevant to our work is the notion of a \textit{pure} completely positive map.
He showed that a completely positive map defined on a C*-algebra is pure if and only if
the minimal Stinespring dilation is irreducible. 
For completely positive maps on general operator systems, this is a necessary
but not sufficient condition.

We begin our approach by showing that every pure unital completely positive map on 
an operator system $\S$ has a pure maximal dilation. 
This dilation has a unique extension to $\ca(\S)$ which is an irreducible 
$*$-representation that necessarily factors through the C*-envelope. 
In other words, it is a boundary representation.

Then some results of Farenick \cite{Far00, Far04} are then used to show that  
there are sufficiently many finite dimensional pure \ucp maps (a.k.a \textit{matrix states}) to completely norm $\S$. 
Dilating these matrix states to boundary representations then yields a sufficient family of boundary representations.

Craig Kleski \cite{Kleski} has some closely related results. In the separable case, he uses
Arveson's measure theoretic approach to show that pure states have dilations to 
boundary representations. Also in connection with the second part of our paper, he shows that
the pure states on $\S$ norm it, and in the separable case, the supremum is attained.
He does not show that pure states completely norm $\S$, which we need.
In a private communication, Kleski showed us how his techniques yield a shorter
proof of  the second part of our argument. He has kindly allowed us to include it here.

\section{Background}

We refer the reader to Paulsen's book \cite{Paulsen} for the background needed for this paper.
For a nice treatment of  maximal dilations (a l\`a Dritschel-McCullough), see
section 2 of \cite{Arv2008}. We briefly recall the central notions that we require.

An \textit{operator system} $\S$ is a unital norm-closed self-adjoint subspace of a C*-algebra.
We always view $\S$ as being contained in the C*-algebra that it generates, $\ca(\S)$.
Sometimes these are called concrete operator systems.
Choi and Effros \cite{ChoiEffros} gave an abstract axiomatic definition of an operator system, 
and established a representation theorem
showing that they can all be represented as concrete operator systems.

A \textit{unital operator algebra} $\A$ is a closed unital subalgebra of a C*-algebra.
Again, there is a definition of an abstract operator algebra, and
a corresponding representation theorem due to Blecher, Ruan and Sinclair \cite{BRS}
showing that they can all be represented (completely isometrically) as subalgebras
of C*-algebras. So our theory applies  to both abstract operator algebras and abstract operator systems.
For our purposes, we will assume that $\S$ or $\A$ is already sitting in a C*-algebra.

A map $\phi$ from any subspace $\M$ of a C*-algebra $\fA$ into a C*-algebra $\fB$
determines a family of maps $\phi_n:\M_n(\M) \to \M_n(\fB)$ given by 
$\phi_n([a_{ij}]) = [\phi(a_{ij})]$.
Say that $\phi$ is \textit{completely bounded} if 
\[ \|\phi\|_{cb} = \sup_{n\ge1} \|\phi_n\| < \infty .\]
Say that $\phi$ is \textit{completely contractive} (c.c.) if $\|\phi\|_{cb}\le1$.
If the domain of $\phi$ is an operator system $\S$, say that $\phi$ is \textit{completely positive} (c.p.)
if $\phi_n$ is positive for all $n\ge1$; and say that $\phi$ is \textit{unital completely positive} (u.c.p.)
if $\phi(1) = 1$. Since $\|\phi\|_{cb} = \|\phi(1)\|$ for \cp maps, we see that
\ucp maps are always completely contractive.

As mentioned in the introduction, every \textit{unital} completely contractive map $\phi$ of a 
\textit{unital} operator space $\M$ into a C*-algebra has a unique self-adjoint extension 
to $\S=\ol{\M+\M^*}$ given by 
\[ \tilde\phi(a+b^*) = \phi(a) + \phi(b)^* .\] Moreover, this map $\tilde\phi$ is completely positive.

A \ucp map $\phi:\S \to \B(\H)$ (or a \cc representation of an operator algebra $\A$) 
has the \textit{unique extension property} if it has a unique
\ucp extension to $\ca(\A)$ which is a $*$-representation.
If, in addition, the $*$-representation is irreducible, it is called a \textit{boundary representation}.
When $\A$ is a function algebra contained in $\rC(X)$, the irreducible $*$-representations
are just point evaluations. The restriction of a point evaluation to $\A$ has the unique extension property
if it has a  unique representing measure (namely, the point mass at the point itself).

A \textit{dilation} of a \cc unital representation $\rho : \A \to \B(\H)$ of an operator algebra $\A$ is a 
representation $\sigma:\A\to\B(\K)$ where $\K$ is a Hilbert space containing $\H$
such that $P_\H \sigma(a)|_\H = \rho(a)$ for $a \in \A$.
Similarly a \textit{dilation} of a \ucp map $\phi :\ S \to \B(\H)$ of an operator system $\S$ is a 
\ucp map $\psi : \S \to \B(\K)$ where $\K$ is a Hilbert space containing $\H$
such that $P_\H \psi(s)|_\H = \phi(s)$ for $s \in \S$.
We will write $\phi \prec \psi$ or $\psi \succ \phi$ to denote that $\psi$ dilates $\phi$.
The map ($\rho$ or $\phi$) is called \textit{maximal} if every dilation (of $\rho$ or $\phi$) is
obtained by attaching a direct summand (i.e.\ $\psi\succ \phi$ implies $\psi = \phi \oplus \psi'$ for some $\psi'$).

As noted above, a representation $\rho$ of an operator algebra $\A$ extends to a 
unique \ucp map $\tilde\rho$ on the operator system $\S = \ol{\A+\A^*}$. 
It is easy to see that a dilation $\sigma$ of $\rho$ extends to a dilation
$\tilde\sigma$ of $\tilde\rho$. However, this does not work in reverse. Indeed, a dilation of $\tilde\rho$
need not be multiplicative on $\A$, in which case it is not the extension of a representation.

Dritschel and McCullough \cite{DritMcCull} show that \cc representations of an operator algebra $\A$
always have maximal dilations. Arveson \cite{Arv2008} has a somewhat
nicer proof, along similar lines, which is valid for \ucp maps on an operator system $\S$.
Dritschel and McCullough show that maximal dilations extend to $*$-representations
of $\ca(\A)$. Arveson \cite{Arv2008} shows that being a maximal dilation of a \ucp map on $\S$
is equivalent to having the unique extension property. Thus a maximal dilation of a \ucp map is multiplicative. 
This implies that if $\rho$ is a \cc representation of $\A$, and $\psi$ is a maximal dilation of $\tilde\rho$,
then $\psi|_\A$ is a maximal dilation of $\rho$.
So establishing results for operator systems recovers the results for operator algebras at the same time.

The \textit{C*-envelope} of an operator system $\S$ consists of a C*-algebra $\fA=:\cenv(\S)$
and a completely isometric unital imbedding $\iota: \S \to \fA$ such that $\fA = \ca(\iota(\S))$, 
with the following universal property: whenever $j:\S\to\fB = \ca(j(\S))$ is a unital completely isometric
map, then there is a $*$-homomorphism $\pi:\fB\to\fA$ such that $\iota = \pi j$.
Hamana \cite{Ham} proved that the C*-envelope always exists.
Dritschel and McCullough \cite{DritMcCull} gave a new proof by showing that any maximal \ucp map on $\S$
extends to a $*$-representation of $\ca(\S)$ which factors through $\cenv(\S)$. 
In particular, when the original map is completely isometric, the maximal dilation yields a $*$-representation
onto the C*-envelope.

Arveson \cite{Arv1969} calls a \cp map $\phi$ \textit{pure} if the only \cp maps
satisfying $0 \le \psi \le \phi$ are scalar multiples of $\phi$. When $\phi$ is defined
on a C*-algebra $\fA$, it has a unique minimal Stinespring dilation $\phi(a) = V^*\pi(a)V$,
where $\pi$ is a $*$-representation of $\fA$ on $\K$ and $V \in \B(\H,\K)$.
Arveson shows that the intermediate \cp maps $\psi$ are precisely those maps 
of the form $\psi(a) = V^*T\pi(a)V$,
for $T \in \pi(\fA)'$ with $0 \le T \le I$.  Moreover, this is a bijective correspondence.
Thus, a \cp map on $\fA$ is pure if and only if the minimal Stinespring dilation is irreducible.
For a \cp map $\phi$ on an operator system $\S$, the minimal Stinespring dilation is not unique.
However, $\phi$ is not pure if any minimal Stinespring representation is reducible.

We will observe that if $\phi$ is maximal and pure, then it extends to an irreducible
$*$-representation of $\ca(\S)$. Our goal will be to establish that every pure \ucp map
from $\S$ into $\B(\H)$ has a pure maximal dilation which is a boundary representation. 
This will be accomplished in Section 2.
In Section 3, we gather the details needed to show that there are enough
boundary representations to completely norm $\S$, so that their direct sum provides
a completely isometric maximal representation of $\S$. 
This relies on results of Farenick \cite{Far00, Far04} on pure matrix states of
operator systems, based on the Krein-Milman type theorem for matrix convex sets
due to Webster and Winkler \cite{WebWink}. Altogether, our results establish that there
are sufficiently many boundary representations to construct the C*-envelope.

\section{Extending pure maps}

First a simple observation mentioned in the preceding section.

\begin{lem} \label{L:boundary}
Every pure maximal \ucp map $\phi:\S \to \B(\H)$ extends to an irreducible
$*$-representation of $\ca(\S)$, and hence is a boundary representation.
\end{lem}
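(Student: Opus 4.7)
The plan is to combine Arveson's equivalence (\emph{maximal $\iff$ unique extension property}), already noted in the background, with a standard purity/commutant argument. First I would invoke that equivalence: since $\phi$ is maximal, there is a unique \ucp extension $\tilde\phi:\ca(\S)\to\B(\H)$, and this extension is automatically a $*$-representation. So the only remaining task is to argue that $\tilde\phi$ is irreducible, because the lemma's conclusion is then immediate (a $*$-representation with the unique extension property that is also irreducible is, by definition, a boundary representation).

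To prove irreducibility I would argue by contradiction. Suppose $P\in\tilde\phi(\ca(\S))'$ is a non-trivial projection. Since $\S\subset\ca(\S)$, in particular $P$ commutes with $\phi(s)=\tilde\phi(s)$ for every $s\in\S$. Then the compression $\psi(s):=P\phi(s)P$ defines a \cp map $\S\to\B(\H)$, and a direct computation using $P\phi(s)=\phi(s)P$ yields
\[
\phi(s)=P\phi(s)P+(I-P)\phi(s)(I-P),
\]
so $\phi-\psi=(I-P)\phi(\cdot)(I-P)$ is also \cp. Thus $0\le\psi\le\phi$.

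Now purity of $\phi$ forces $\psi=t\phi$ for some scalar $t$. Evaluating at the unit of $\S$ gives $P=tI$, and since $P$ is a projection this means $P\in\{0,I\}$, contradicting the assumption that $P$ was non-trivial. Hence $\tilde\phi(\ca(\S))'=\bC I$, i.e.\ $\tilde\phi$ is irreducible, and $\phi$ is a boundary representation.

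The argument is essentially mechanical once one has Arveson's maximal/unique-extension equivalence in hand; the only mild subtlety is the observation that an element of the commutant of $\tilde\phi(\ca(\S))$ automatically commutes with $\phi(\S)$, which is what allows the purity hypothesis (stated only on $\S$) to rule out non-trivial projections in the commutant of the whole $*$-representation. I don't anticipate any genuine obstacle here, so this lemma is really just the bridge between the two notions, setting up the real work of Section 2: producing pure maximal dilations of arbitrary pure \ucp maps.
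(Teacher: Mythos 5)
Your proposal is correct and follows essentially the same route as the paper: invoke Arveson's equivalence of maximality with the unique extension property to get the $*$-representation, then use a nontrivial projection $P$ in its commutant to produce a \cp map $\psi=P\phi(\cdot)$ with $0\le\psi\le\phi$ whose value at $1$ is not a scalar, contradicting purity. The only cosmetic difference is that you write $\psi=P\phi(\cdot)P$ and verify $0\le\psi\le\phi$ explicitly via the complementary compression, whereas the paper writes $P\phi(\cdot)$ directly (the two agree since $P$ commutes with $\phi(\S)$).
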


\begin{proof}
Arveson \cite{Arv2008} showed that maximal \ucp maps have the unique extension property.
So $\phi$ extends uniquely to a $*$-representation $\pi$ of $\ca(\S)$.
It remains to show that $\pi$ is irreducible. If $\pi$ is not irreducible,
then there is a proper projection $P$ commuting with $\pi(\ca(\S))$.
Thus $\psi(s) = P\phi(s)$ is a \cp map such that $0 \le \psi \le \phi$.
However, $\psi(1) = P$ is not a scalar multiple of $I = \phi(1)$.
So $\phi$ is not pure, contrary to our hypothesis. Hence $\pi$ is irreducible,
and therefore is a boundary representation.
\end{proof}

The proof in \cite{Arv2008} that maximal dilations exist uses the following concept.
A \ucp map $\phi$ is \textit{maximal at $(s_0,x_0)$} for $s_0\in\S$ and $x_0\in\H$
if whenever $\psi \succ \phi$, we have $\psi(s_0)x_0 = \phi(s_0)x_0$.
It is clear that this is true precisely when $\|\psi(s_0)x_0\| = \|\phi(s_0)x_0\|$
for all $\psi \succ\phi$.

The BW topology on $\B(\S,\B(\H))$ is the point-weak-$*$ topology.
An easy application of the Banach-Alaoglu Theorem shows that the unit
ball is compact since, in the BW topology, it embeds as a closed subset 
of the product of closed balls of $\B(\H)$ with the weak-$*$ topology.
In fact, $\B(\S,\B(\H))$ is a dual space, with the BW topology coinciding with the
weak-$*$ topology on bounded sets \cite[Lemma 7.1]{Paulsen}; but we do not need this fact.
The \cp and \ucp maps are closed in this topology \cite{Arv1969};
and thus the set of \ucp maps is BW-compact.

\begin{lem} \label{L:one_step_extn}
Let $\S$ be an operator system, and let $\phi:\S \to \B(\H)$ be a \ucp map.
Given $s_0\in \S$ and $x_0\in\H$, there is a \ucp dilation of $\phi$ to a map 
$\psi: \S \to \B(\H\oplus\bC)$ which is maximal at $(s_0,x_0)$, i.e. 
\[ \|\psi(s_0)x_0\| = \sup \{ \|\rho(s_0)x_0\| : \rho \succ \phi \}. \]
\end{lem}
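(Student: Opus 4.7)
The plan is to obtain $\psi$ as a BW-limit of dilations whose values at $(s_0,x_0)$ approach the supremum. The key is to first \emph{compress} any dilation down to one living on $\H\oplus\bC$, so that all the competitors land in the same target space and we can invoke BW-compactness.

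Let $M=\sup\{\|\rho(s_0)x_0\|:\rho\succ\phi\}$, which is finite since $\|\rho(s_0)x_0\|\le\|s_0\|\|x_0\|$. First I would establish the following compression trick: if $\rho:\S\to\B(\K)$ is any \ucp dilation of $\phi$ (so $\H\subseteq\K$), set $y=\rho(s_0)x_0-\phi(s_0)x_0$. Because $\rho\succ\phi$, we have $y\in\H^\perp$, and if we let $\K_\rho=\H+\bC y\subseteq\K$ then $\rho(s_0)x_0\in\K_\rho$. The compression $\rho'(s)=P_{\K_\rho}\rho(s)|_{\K_\rho}$ is again \ucp and still dilates $\phi$ (since $\H\subseteq\K_\rho$), and crucially $\rho'(s_0)x_0=\rho(s_0)x_0$, so $\|\rho'(s_0)x_0\|=\|\rho(s_0)x_0\|$. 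Now choose a unitary $U_\rho:\H\oplus\bC\to\K_\rho$ that is the identity on $\H$ and sends $(0,1)$ to $y/\|y\|$ (choose any unit vector when $y=0$), and set $\tilde\rho(s)=U_\rho^*\rho'(s)U_\rho$. Then $\tilde\rho:\S\to\B(\H\oplus\bC)$ is \ucp, dilates $\phi$, and satisfies
\[
\tilde\rho(s_0)x_0=\bigl(\phi(s_0)x_0,\;\|y\|\bigr)\in\H\oplus\bC,\qquad \|\tilde\rho(s_0)x_0\|=\|\rho(s_0)x_0\|.
\]

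Next I would pick a net $(\rho_\alpha)$ of \ucp dilations of $\phi$ with $\|\rho_\alpha(s_0)x_0\|\to M$, apply the compression trick to each to get $\tilde\rho_\alpha:\S\to\B(\H\oplus\bC)$, and use BW-compactness of the unit ball of $\B(\S,\B(\H\oplus\bC))$ to extract a BW-convergent subnet with limit $\psi$. Since the set of \ucp maps is BW-closed, $\psi$ is \ucp. The relation $\langle\tilde\rho_\alpha(s)v,w\rangle=\langle\phi(s)v,w\rangle$ for $v,w\in\H$ passes to the limit, so $\psi\succ\phi$. Finally, writing $\tilde\rho_\alpha(s_0)x_0=(\phi(s_0)x_0,t_\alpha)$ with $t_\alpha=\|y_\alpha\|=\sqrt{\|\rho_\alpha(s_0)x_0\|^2-\|\phi(s_0)x_0\|^2}$, the scalars $t_\alpha$ converge to $t:=\sqrt{M^2-\|\phi(s_0)x_0\|^2}$, and BW-convergence forces $\psi(s_0)x_0=(\phi(s_0)x_0,t)$, whence $\|\psi(s_0)x_0\|^2=\|\phi(s_0)x_0\|^2+t^2=M^2$. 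Combined with $\|\psi(s_0)x_0\|\le M$ (since $\psi$ is itself a dilation), this gives equality.

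The only real obstacle is that the natural supremum is taken over dilations living on arbitrary Hilbert spaces $\K$, whereas BW-compactness requires a fixed codomain. The compression trick above is precisely what resolves this: it shows that no generality is lost by restricting attention to dilations on $\H\oplus\bC$, where the values $\|\rho(s_0)x_0\|$ are unchanged. Everything else is a routine application of BW-compactness and the fact that the dilation relation, being defined by scalar-valued weak equations, is preserved under BW-limits.
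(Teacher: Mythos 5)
Your proof is correct and follows essentially the same route as the paper's: compress an arbitrary dilation to $\operatorname{span}\{\H,\rho(s_0)x_0\}\cong\H\oplus\bC$ without changing $\|\rho(s_0)x_0\|$, then extract a BW-limit from a net approaching the supremum. Your observation that the compressed dilations all take the normal form $(\phi(s_0)x_0,t_\alpha)$ at $(s_0,x_0)$, so that the norm passes to the limit, is exactly the content of the paper's ``routine compactness argument.''
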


\begin{proof}
First note that if $\rho \succ \phi$, then the compression of $\rho$ to 
the Hilbert space $\spn\{\H, \rho(s)x\}$
yields a \ucp dilation $\rho'$ of $\phi$ into $\H \oplus \bC$ with $\|\rho'(s_0)x_0\|=\|\rho(s_0)x_0\|$.
So the supremum is the same if we consider only \ucp maps into $\B(\H\oplus\bC)$.
The set of all such maps is compact in the BW topology. Hence a routine compactness
argument yields the desired map $\psi$.
\end{proof}

This next lemma is motivated by Farenick's result \cite[Theorem B]{Far00} which 
states that a matrix state is pure if and only if it is a matrix extreme point.
However our arguments will work in Hilbert spaces of arbitrary dimension.
The goal is to construct a one dimensional dilation of a pure \ucp map to a \ucp map 
which is maximal at $(s_0,x_0)$ while conserving purity.

\begin{lem} \label{L:extend_pure}
Let $\S$ be an operator system, and let $\phi:\S \to \B(\H)$ be a pure \ucp map.
Given $s_0\in \S$ and $x_0\in\H$ at which $\phi$ is not maximal, there is a pure \ucp dilation
$\psi: \S \to \B(\H\oplus\bC)$ which is maximal at  $(s_0,x_0)$.
\end{lem}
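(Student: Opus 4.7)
My plan is to extract $\psi$ as an extreme point of a suitable BW-compact convex set of u.c.p.\ dilations and then verify purity. Applying Lemma \ref{L:one_step_extn} first gives a u.c.p.\ dilation $\psi_0 : \S \to \B(\H \oplus \bC)$ of $\phi$ maximal at $(s_0, x_0)$, and I set $v := \psi_0(s_0)(x_0 \oplus 0) \in \H \oplus \bC$. Note that the condition of maximality at $(s_0, x_0)$, namely $\|\rho(s_0)(x_0 \oplus 0)\|^2 = \langle \phi(s_0^* s_0) x_0, x_0 \rangle$, is quadratic in $\rho$ and therefore does not define a convex set. To recover convexity I fix the entire vector $\psi(s_0)(x_0 \oplus 0) = v$ and consider
\[
K = \{\psi : \S \to \B(\H \oplus \bC) \text{ u.c.p.} \,:\, \psi \succ \phi, \ \psi(s_0)(x_0 \oplus 0) = v\}.
\]
All three defining conditions are linear and BW-closed, so $K$ is a convex, BW-compact, nonempty (it contains $\psi_0$) subset of the u.c.p.\ maps into $\B(\H \oplus \bC)$. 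By Krein--Milman it has an extreme point $\psi$, which is maximal at $(s_0, x_0)$ because $\|\psi(s_0)(x_0 \oplus 0)\| = \|v\|$ equals the supremum.

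The main task is to show this extreme point $\psi$ is pure. Suppose $\mu$ is c.p.\ with $0 \le \mu \le \psi$. The $(1,1)$-compression $\mu_{11}$ is a c.p.\ map dominated by $\phi$; purity of $\phi$ gives $\mu_{11} = t\phi$ for some $t \in [0, 1]$. The goal is then to conclude $\mu = t\psi$. For $t \in (0, 1)$, if one can establish the two normalizations $\mu(1) = t I_{\H \oplus \bC}$ and $\mu(s_0)(x_0 \oplus 0) = t v$, then $\mu/t$ and $(\psi - \mu)/(1-t)$ both lie in $K$, and the convex decomposition $\psi = t(\mu/t) + (1-t)((\psi - \mu)/(1-t))$ combined with extremality forces $\mu = t\psi$. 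The boundary cases $t = 0, 1$ reduce to the observation that the vanishing of the $(1,1)$-block of $\mu$ (respectively $\psi - \mu$) forces $\mu$ (respectively $\psi - \mu$) to be supported on the $\bC$-summand, and yields a perturbation of $\psi$ within $K$ unless $\mu \in \{0, \psi\}$.

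The main obstacle is establishing the two normalizations in the case $t \in (0, 1)$. My expectation is that they follow from the Kadison--Schwarz inequality applied to both $\mu$ and $\psi - \mu$ --- which is valid since each is dominated by the unital map $\psi$ and so has Stinespring isometry of norm at most one --- combined with the saturation of the Schwarz inequality for $\psi$ at $(s_0, x_0)$, which is precisely the maximality condition. Intuitively, the saturation inherited from $\psi$ forces simultaneous saturation in the two summands, and this rigidifies the off-diagonal and $(2,2)$-entries of $\mu(1)$ and of $\mu(s_0)(x_0 \oplus 0)$ into their predicted values. Carrying out this equality-case analysis carefully, along the lines motivated by Farenick's matrix extreme arguments but in the Hilbert space setting, is the main technical content of the proof.
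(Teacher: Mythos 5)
Your setup coincides with the paper's: pin the value of the dilation at $(s_0,x_0)$ to get a nonempty BW-compact convex set of \ucp dilations into $\B(\H\oplus\bC)$, take an extreme point $\psi$, and for $0\le\mu\le\psi$ use purity of $\phi$ on the $(1,1)$-compression to get $\mu_{11}=t\phi$. But the lemma is not yet proved at that point: the two normalizations $\mu(1)=tI_{\H\oplus\bC}$ and $\mu(s_0)(x_0\oplus 0)=tv$ are exactly where all the work lies, and you leave them as an ``expectation.'' Worse, the mechanism you propose for them does not apply: you invoke the Kadison--Schwarz inequality and its ``saturation'' at $(s_0,x_0)$, writing $\|\rho(s_0)(x_0\oplus 0)\|^2=\langle\phi(s_0^*s_0)x_0,x_0\rangle$, but $s_0^*s_0$ need not belong to the operator system $\S$, so neither this identity nor the Schwarz inequality $\psi(s_0)^*\psi(s_0)\le\psi(s_0^*s_0)$ is available; maximality at $(s_0,x_0)$ is a statement about suprema over dilations, not a Schwarz-saturation identity. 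The treatment of the boundary cases $t\in\{0,1\}$ is similarly only gestured at.

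For comparison, the paper closes this gap as follows. Write $\psi=\psi_1+\psi_2$ with $\psi_1=\mu$, and first replace $\psi_i$ by $\psi_i'=(1-2\ep)\psi_i+\ep\psi$ so that $Q_i:=\psi_i'(1)\ge\ep I$ is invertible (this uniformly disposes of the cases $t=0,1$). Factor $Q_i=\gamma_i^*\gamma_i$ by the Cholesky algorithm with $\gamma_i$ upper triangular, and form $\tau_i=\gamma_i^{-1*}\psi_i'\gamma_i^{-1}$. The key point is that each $\tau_i$ is again a \ucp dilation of $\phi$ into $\H\oplus\bC$, so by the definition of $L$ its $\bC$-component at $(s_0,x_0)$ has modulus at most $\eta=(L^2-\|\phi(s_0)x_0\|^2)^{1/2}$. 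Since the $\bC$-components of $\psi_1'(s_0)x_0$ and $\psi_2'(s_0)x_0$ must sum to exactly $\eta$, an equality analysis in the Cauchy--Schwarz inequality (using $Q_1+Q_2=I$ and, crucially, $\eta>0$, i.e.\ the hypothesis that $\phi$ is not maximal at $(s_0,x_0)$) forces each $\gamma_i$ to be the scalar matrix $\lambda_i^{1/2}I$; hence $\psi_i'=\lambda_i\tau_i$, so $\psi=\lambda_1\tau_1+\lambda_2\tau_2$ is a convex combination of dilations of $\phi$, and extremality (the pinned set is a face of the set of all dilations into $\H\oplus\bC$) gives $\tau_i=\psi$ and $\mu=t\psi$. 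This renormalization-and-Cauchy--Schwarz argument is the substance of the proof, and your proposal does not supply it or a working substitute.
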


\begin{proof}
Let 
\[
 L = \sup \{ \|\rho(s_0)x_0\| : \rho \succ \phi \}
 \qand
 \eta = (L^2 - \|\phi(s_0)x_0\|^2 )^{1/2} .
\]
Let 
\[ X = \{ \psi \in \UCP(\S, \B(\H\oplus\bC)) : \psi \succ \phi \AND \psi(s_0)x_0 = \phi(s_0)x_0 \oplus \eta \} .\]
Conjugating the map obtained in the previous lemma by a unitary of the form $I \oplus \zeta$ implies that this is a non-empty BW-compact convex set.
Let $\psi_0$ be an extreme point of $X$.
Note that $X$ is a face of 
\[
 Y = \{ \psi \in \UCP(\S, \B(\H\oplus\bC)) : \psi \succ \phi \} .
\]
Hence $\psi_0$ is also an extreme point of $Y$.

We claim that $\psi_0$ is pure. To this end, suppose that $\psi_1$ is a \cp map
into $\H \oplus \bC$ such that $0 \le \psi_1 \le \psi_0$.
Set $\psi_2 = \psi_0 - \psi_1$.
To avoid the possibility that $\psi_i(1)$ may not be invertible, take a small $\ep>0$
and use 
\[
 \psi'_i = (1-2\ep) \psi_i + \ep \psi_0 \qfor i=1,2 . 
\]
Then $\psi_0 = \psi'_1 + \psi'_2$ and $\psi'_i(1) =: Q_i \ge \ep I$.
Thus $Q_i$ is invertible, and $Q_1+Q_2 = \psi_0(1) = I$.
If we show that $\psi_1'$ is a scalar multiple of $\psi_0$, then the same follows for $\psi_1$.

Observe that $P_\H \psi'_i(\cdot) |_\H \le \phi$.
By purity of $\phi$, there are positive scalars $\lambda_i$ 
so that $P_\H \psi'_i(\cdot) |_\H = \lambda_i \phi$.
Clearly $\lambda_1 + \lambda_2 = 1$ and $\lambda_i \ge \ep$.
Thus writing $Q_i$ as a matrix with respect to the decomposition $\H \oplus \bC$,
there is a vector $x_i \in \H$ and scalar $\alpha_i$ so that
\[
 Q_i = 
 \begin{bmatrix} \lambda_i & \lambda_i^{1/2} x_i\\
 \lambda_i^{1/2} x_i^* & \alpha_i \end{bmatrix} 
 = 
 \begin{bmatrix} \lambda_i ^{1/2} & 0\\
 x_i^* & \beta_i \end{bmatrix} 
 \begin{bmatrix} \lambda_i^{1/2} &x_i\\
 0 & \beta_i \end{bmatrix} .
\]
The factorization is possible by the Cholesky algorithm, 
where the positivity and invertibility of $Q_i$ guarantee that $\alpha_i > 0$ and
\begin{equation}
\beta_i = (\alpha_i - \|x_i\|^2)^{1/2} > 0. \label{eq:beta_i_alpha_i}
\end{equation}
Since $Q_1+Q_2=I$, we obtain
\begin{equation}
\lambda_1 + \lambda_2 = 1, \label{eq:lambda_i}
\end{equation}
\begin{equation}
\lambda_1^{1/2} x_1 + \lambda_2^{1/2} x_2= 0  \label{eq:lambda_i_x_i}
\end{equation}
and
\begin{equation}
\alpha_1 + \alpha_2 = 1. \label{eq:alpha_i}
\end{equation}
Let 
\[
 \gamma_i =  \begin{bmatrix} \lambda_i^{1/2} &x_i\\ 0 & \beta_i \end{bmatrix} ;
 \quad\text{then}\quad
 \gamma_i^{-1} =  \begin{bmatrix} \lambda_i^{-1/2} &-\lambda_i^{-1/2} \beta_i^{-1} x_i\\ 0 & \beta_i^{-1} \end{bmatrix} .
\] 
Also,
\[
 \gamma_1^*\gamma_1 + \gamma_2^*\gamma_2 = Q_1+Q_2 = I.
\]

Define \ucp maps 
\begin{align*}
 \tau_i(s) &= \gamma_i^{-1*} \psi_i'(s) \gamma_i^{-1}  \\&= 
 \begin{bmatrix} \lambda_i^{-1/2} & 0\\ * & * \end{bmatrix} 
 \begin{bmatrix} \lambda_i \phi(s) & *\\ * & * \end{bmatrix} 
 \begin{bmatrix} \lambda_i^{-1/2} & *\\ 0 & * \end{bmatrix}  \\&=
 \begin{bmatrix} \phi(s) & S_i(s)\\ T_i(s) & f_i(s) \end{bmatrix} . 
\end{align*}
Here $S_i \in \B(\S,\H)$, $T\in \B(\S,\H^*)$ and $f_i$ is a state on $\S$.
Since $\tau$ is positive, we have that $T_i(s) = S_i(s^*)^*$, but we will not require this.
Note that $\tau_i$ is a \ucp map such that
\[
P_\H \tau_i(\cdot) |_\H =  \phi.
\]
Hence $\tau_i$ is a dilation of $\phi$.  
Hence from the definition of $\eta$, we see that
\begin{equation}
  |T_i(s_0) x_0| \le \eta . \label{eq:T_i}
\end{equation}

Moreover,
\begin{align*}
 \psi_i'(s) &= \gamma_i^* \tau_i(s) \gamma_i  \\&=
 \begin{bmatrix} \lambda_i ^{1/2} & 0\\ x_i^* & \beta_i \end{bmatrix} 
 \begin{bmatrix} \phi(s) & S_i(s)\\ T_i(s) & f_i(s) \end{bmatrix} 
 \begin{bmatrix} \lambda_i ^{1/2} & x_i \\ 0 & \beta_i \end{bmatrix}  \\&=
 \begin{bmatrix} \lambda_i\phi(s) & * \\ 
 \lambda_i^{1/2} \big( x_i^* \phi(s) + \beta_i T_i(s) \big)  & * \end{bmatrix}  \! .
\end{align*}
Consideration of the lower left entries of $\psi_1'(s_0)$ and $\psi_2'(s_0)$ evaluated at $x_0$, combined with (\ref{eq:lambda_i_x_i}) yields 
\begin{align*}
 \eta &= P_{\bC} \psi_0(s_0) x_0 \\&=
 P_{\bC} \psi'_1(s_0)  x_0 +
 P_{\bC} \psi'_2(s_0)  x_0  \\ &=
 \lambda_1^{1/2} \big( x_1^* \phi(s_0) + \beta_1 T_1(s_0) \big)x_0 + 
 \lambda_2^{1/2} \big( x_2^* \phi(s_0) + \beta_2 T_2(s_0) \big)x_0  \\ &=
 (\lambda_1^{1/2} x_1 + \lambda_2^{1/2} x_2)^* \phi(s_0) x_0 + 
 \lambda_1^{1/2}\beta_1 T_1(s_0) x_0 + \lambda_2^{1/2}\beta_2 T_2(s_0) x_0  \\ &=
 \lambda_1^{1/2}\beta_1 T_1(s_0) x_0 + \lambda_2^{1/2}\beta_2 T_2(s_0) x_0 .  
\end{align*}
Therefore, 
\begin{align*}
 \eta &\le (\lambda_1^{1/2} \beta_1 + \lambda_2^{1/2} \beta_2)\eta \\
 &\le (\lambda_1^{1/2} \alpha_1^{1/2} + \lambda_2^{1/2} \alpha_2^{1/2})\eta \\
 & \le (\lambda_1 + \lambda_2)^{1/2} (\alpha_1 + \alpha_2)^{1/2} \eta \\
 & = \eta,
\end{align*}
where we have used (\ref{eq:T_i}), (\ref{eq:beta_i_alpha_i}), (\ref{eq:lambda_i}), (\ref{eq:alpha_i}) and the Cauchy-Schwarz inequality.

Since this is an equality, the last inequality yields that $\beta_i^2 = \alpha_i$, and hence $x_i=0$. Furthermore, 
 an equality in the use of the Cauchy-Schwarz inequality implies that the unit vectors 
 \[(\lambda_1^{1/2},\lambda_2^{1/2}) \quad \mathrm{and} \quad (\alpha_1^{1/2},\alpha_2^{1/2})\]
  are collinear, and hence must be equal. Thus $\beta_i = \alpha_i^{1/2} = \lambda_i^{1/2}$ and $\gamma_i = \lambda_i^{1/2}I$ is a scalar matrix.

{}From above,
\[
\psi_i'(\cdot) = \gamma_i^* \tau_i(\cdot) \gamma_i = \lambda_i\tau_i(\cdot).
\]
Therefore $\psi_0 = \lambda_1 \tau_1 + \lambda_2 \tau_2$ is a convex combination of the $\tau_i$.
Since $\psi_0$ is an extreme point of $X$, we obtain that $\tau_i = \psi_0$.
Thus $\psi'_1 = \lambda_1 \psi_0$. So $\psi_0$ is pure.
\end{proof}

It is easy to see that $\phi$ is maximal if and only if it is maximal at every $(s,x)$
for $s\in\S$ and $x\in\H$ \cite{Arv2008}. We will establish the existence of pure maximal dilations by a transfinite induction.
In the separable case, a simple induction is possible.

\pagebreak[3]
\begin{thm} \label{T:puremax}
Let $\S$ be an operator system, and let $\phi:\S \to \B(\H)$ be a pure \ucp map.
Then $\phi$ has a pure maximal dilation $\psi$. 
Therefore $\psi$ extends to a $*$-representation of $\ca(\S)$ 
which is a boundary representation of $\S$.
\end{thm}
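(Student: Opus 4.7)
The plan is to apply Lemma~\ref{L:extend_pure} transfinitely, producing a pure maximal \ucp dilation $\psi$ of $\phi$ to which Lemma~\ref{L:boundary} then applies. Build a transfinite sequence of pure \ucp dilations $\phi_\alpha : \S \to \B(\H_\alpha)$ with $\phi_0 = \phi$ on $\H_0 = \H$. At successor ordinals $\alpha + 1$, if $\phi_\alpha$ fails to be maximal at some pair $(s, x) \in \S \times \H_\alpha$, apply Lemma~\ref{L:extend_pure} to obtain $\phi_{\alpha + 1}$ on $\H_\alpha \oplus \bC$ that is pure and maximal at $(s, x)$; otherwise the induction halts with $\phi_\alpha$ already a pure maximal dilation. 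A standard cardinality bound (each successor step strictly enlarges the Hilbert space, within a fixed sufficiently large container) ensures termination at some ordinal $\kappa$, yielding the desired $\psi = \phi_\kappa$.

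At a limit ordinal $\lambda$, set $\H_\lambda = \ol{\bigcup_{\alpha < \lambda} \H_\alpha}$ and construct $\phi_\lambda : \S \to \B(\H_\lambda)$ as an inductive limit. For $s \in \S$ and $x, y \in \H_\alpha$ (some $\alpha < \lambda$), the dilation relation together with $y \in \H_\alpha$ gives
\[
\langle \phi_\beta(s) x, y \rangle \;=\; \langle P_{\H_\alpha} \phi_\beta(s) x, y \rangle \;=\; \langle \phi_\alpha(s) x, y \rangle
\]
for every $\beta \geq \alpha$. Hence the assignment $\langle \phi_\lambda(s) x, y \rangle := \langle \phi_\alpha(s) x, y \rangle$ defines a well-posed sesquilinear form on the dense subspace $\bigcup_\alpha \H_\alpha$, bounded by $\|s\|$; it extends to an operator $\phi_\lambda(s) \in \B(\H_\lambda)$, and the resulting map $\phi_\lambda$ is routinely seen to be \ucp and to dilate each $\phi_\alpha$.

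The key technical step, and the main obstacle, is verifying that $\phi_\lambda$ remains pure. Given a \cp map $\psi$ with $0 \leq \psi \leq \phi_\lambda$, set $\psi_\alpha := P_{\H_\alpha} \psi(\cdot)|_{\H_\alpha}$; then $0 \leq \psi_\alpha \leq \phi_\alpha$, so purity of $\phi_\alpha$ produces a scalar $c_\alpha \in [0, 1]$ with $\psi_\alpha = c_\alpha \phi_\alpha$. For $\alpha \leq \beta$, compressing $\psi_\beta = c_\beta \phi_\beta$ from $\H_\beta$ down to $\H_\alpha$ yields $\psi_\alpha = c_\beta \phi_\alpha$, so $c_\alpha = c_\beta$. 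Thus $c_\alpha \equiv c$ is constant, and $\psi$ agrees with $c\phi_\lambda$ on the dense subspace $\bigcup_\alpha \H_\alpha$, hence on all of $\H_\lambda$. This preservation of purity under limits, which is not a priori automatic, is exactly what makes the transfinite induction go through, and it reduces to the compatibility of compression across the chain.
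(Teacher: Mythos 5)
Your overall strategy is the paper's: iterate Lemma~\ref{L:extend_pure} transfinitely, preserve purity at limit stages by compressing a dominated map $\psi$ to each $\H_\alpha$ and matching the resulting scalars, and finish with Lemma~\ref{L:boundary}. Your limit-stage construction and the purity argument there are correct and essentially identical to the paper's. The problem is the successor/termination bookkeeping. You run an open-ended recursion (``dilate at some pair where maximality fails, halt when none remains'') and assert termination from ``a standard cardinality bound \dots within a fixed sufficiently large container.'' But there is no fixed container in your construction: each successor step genuinely enlarges the Hilbert space, and --- crucially --- each enlargement introduces new vectors $x\in\H_{\alpha+1}\setminus\H_\alpha$, hence new pairs $(s,x)$ at which maximality must be arranged. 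Since $\dim\H_\alpha\le\dim\H+|\alpha|$ grows only by one per step, no cardinality of $\H_\alpha$ is ever exceeded at a contradictory rate, and you have exhibited no monovariant that forces the set of untreated pairs to be exhausted. A Zorn-style argument inside a big ambient space (\`a la Dritschel--McCullough) is also not available to you as stated, because purity is only known to survive the specific one-dimensional dilations of Lemma~\ref{L:extend_pure} and the direct limits you control, not an arbitrary upper bound of a chain of dilations chosen inside a container.

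The paper is organized precisely to avoid needing any termination argument. It fixes in advance a dense subset of $b_1(\S)\times b_1(\H)$ indexed by a \emph{fixed} ordinal $\Lambda$, makes one transfinite pass of predetermined length to obtain $\psi_1$ on $\K_1$ maximal at every $(s,x)$ with $x\in\H$ (maximality at a pair persists under further dilation, and extends from a dense set of pairs by continuity), then repeats this $\omega$ times over the successively enlarged spaces $\K_1\subset\K_2\subset\cdots$, and takes one final direct limit $\psi_\infty$ on $\K_\infty=\ol{\bigcup_k\K_k}$, which is maximal at every $(s,x)$ with $x\in\K_\infty$ and hence maximal; purity passes to both kinds of limits by your compression argument. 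To repair your proof, either adopt this ``one pass per fixed dense set, iterate countably, take a limit'' scheme, or supply an actual proof that your recursion halts --- the latter does not appear to be standard or easy.
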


\begin{proof}
Enumerate a dense subset of  $b_1(\S) \times b_1(\H)$, the product of unit balls of $\S$ and $\H$, 
using an ordinal $\Lambda$, as 
\[
 \{ (s_\lambda, x_\lambda) : \lambda < \Lambda, \text{ such that } \lambda \text{ is a successor ordinal} \} . 
\]
We will use transfinite induction to construct a pure \ucp dilation of $\phi$
which is maximal at each $(s_\lambda, x_\lambda)$.

Start with $\phi_0 := \phi$. At each successor ordinal $1+\lambda$ for $\lambda\ge 0$,
we have a pure \ucp dilation $\phi_\lambda$ of $\phi$ into a Hilbert space $\H_\lambda$
which is maximal at $(s_\alpha, x_\alpha)$ for all $\alpha \le \lambda$.
If $\phi_\lambda$ is already maximal at $(s_{1+\lambda}, x_{1+\lambda})$, 
set $\phi_{1+\lambda}=\phi_\lambda$.
Otherwise, use  Lemma~\ref{L:extend_pure} to obtain a 1-dimensional dilation of $\phi_\lambda$ to a pure \ucp map
$\phi_{1+\lambda}$ into a Hilbert space $\H_{\lambda + 1}$ which is maximal at $(s_{1+\lambda}, x_{1+\lambda})$.

At each limit ordinal $\mu$, for each $\alpha < \mu$, we have a Hilbert space
$\H_\alpha$ and pure \ucp dilation $\phi_\alpha$ which is maximal at $(s_\lambda, x_\lambda)$
for each successor ordinal $\lambda \le \alpha$. Moreover if $\lambda < \alpha$,
then $\H_\lambda \subset \H_\alpha$ and $\phi_\lambda \prec \phi_\alpha$.
Let $\H_\mu$ be the direct limit of the Hilbert spaces $\H_\alpha$,
which we can consider as the completion of the union $\bigcup_{\alpha < \mu}\H_\alpha$.
Then we define $\phi_\mu$ so that the compression of $\phi_\mu$ to $\H_\alpha$ is $\phi_\alpha$
for all $\alpha < \mu$. Clearly $\phi_\mu$ is a \ucp map which is a dilation of $\phi_\alpha$
for each $\alpha < \mu$. To see that $\phi_\mu$ is pure, suppose that $0 \le \tau \le \phi_\mu$.
The compression of $\tau$ to $\H_\alpha$ satisfies 
\[ 0 \le P_{\H_\alpha} \tau(\cdot) |_{\H_\alpha} \le \phi_\alpha .\]
By purity, there is a scalar $t$ so that $P_{\H_\alpha} \tau(\cdot) |_{\H_\alpha} = t \phi_\alpha$.
Moreover $tI = P_{\H_\alpha} \tau(1) |_{\H_\alpha}$; so $t$ is independent of $\alpha$.
By continuity, $\tau = t \phi_\mu$ and hence $\phi_\mu$ is pure.

The result at the end of this induction is a pure \ucp dilation $\psi_1$ of $\phi$ 
acting on a Hilbert space $\K_1$, which by continuity is maximal at $(s,x)$ for every $s\in\S$ and $x \in \H$.
Repeat this procedure recursively to obtain a sequence of pure \ucp dilations $\psi_k$ acting on  $\K_k$
which are maximal at $(s,x)$ for every $s\in\S$ and $x \in \K_{k-1}$.
The direct limit of this sequence is a pure \ucp dilation $\psi_\infty$ acting on $\K_\infty$
which is maximal at $(s,x)$ for every $s\in\S$ and $x \in \K_\infty$;
and thus is maximal.
Arguing as in the limit ordinal case above, $\psi_\infty$ is pure.
Finally, by Lemma~\ref{L:boundary}, $\psi_\infty$ extends to an irreducible 
$*$-representation of $\ca(\S)$ which is a boundary representation of $\S$.
\end{proof}

\begin{rem}
If $\H$ is finite dimensional and $\S$ is separable, the intermediate dilations of the previous proof can be kept finite dimensional,
so that only the final limit dilation is infinite dimensional. This is accomplished by doing the
dilations at the $k$-th stage only for a finite set  of pairs $(s_i,x^k_j)$,  where 
$1 \le i \le N_k$ and $\{x^k_j\}$ forms a finite $\ep_k$-net in the unit sphere of $\H_k$. Here, 
$N_k$ and $\ep_k$ are chosen such that  $\lim_k N_k = \infty$ and $\lim_{k\to\infty} \ep_k = 0$.
In the limit, one still obtains a maximal dilation.
\end{rem}

\section{Sufficiency of boundary representations}

Now that we have a method for constructing boundary representations,
we show that there are enough of them to yield the C*-envelope.
We provide two arguments. The first very slick argument is due to
Craig Kleski \cite{Kleski_email}, and we thank him for allowing us to include this proof here.
This argument is based on states on $\M_n(\S)$.

\begin{thm} \label{T:norm}
If $S\in \M_n(\S)$, then there is a boundary representation $\pi$ of $\S$
such that $\|S\| = \|\pi^{(n)}(S)\|$.
\end{thm}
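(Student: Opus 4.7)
The plan is to reduce to Theorem~\ref{T:puremax} by working inside the operator system $\M_{2n}(\S)$, whose states are themselves pure \ucp maps into $\bC$; a boundary representation of $\M_{2n}(\S)$ so produced will then descend to the desired boundary representation of $\S$.

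First I would symmetrize: the self-adjoint element
\[ \tilde S = \begin{pmatrix} 0 & S \\ S^* & 0 \end{pmatrix} \in \M_{2n}(\S) \]
has $\|\tilde S\| = \|S\|$, and because $\|\tilde S\|$ is realized as an eigenvalue of $\tilde S$ (the top singular value of $S$) it lies in the spectrum of $\tilde S$; hence there is a state of $\M_{2n}(\S)$ with $\omega(\tilde S) = \|\tilde S\|$. The attaining states form a w*-compact face of the state space, so Krein--Milman supplies an extreme point $\omega$, which is a pure state of $\M_{2n}(\S)$. Viewing $\omega$ as a pure \ucp map $\M_{2n}(\S) \to \bC$, Theorem~\ref{T:puremax} applied to the operator system $\M_{2n}(\S)$ dilates $\omega$ to a boundary representation $\tilde\pi$ of $\M_{2n}(\S)$; that is, an irreducible $*$-representation of $\ca(\M_{2n}(\S)) = \M_{2n}(\ca(\S))$ with the unique extension property, and $\omega(X) = \langle \tilde\pi(X)\zeta, \zeta\rangle$ for some unit vector $\zeta$.

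The step I expect to be the main obstacle is showing that $\tilde\pi$ descends to a boundary representation of $\S$ itself. Since $\M_{2n}(\ca(\S)) \cong \M_{2n}(\bC) \otimes \ca(\S)$ with $\M_{2n}(\bC)$ simple, every irreducible $*$-representation of $\M_{2n}(\ca(\S))$ has the form $\tilde\pi = \operatorname{id} \otimes \pi$ for an irreducible $*$-representation $\pi : \ca(\S) \to \B(\K)$. To verify that $\pi$ is a boundary representation of $\S$, I would take an arbitrary \ucp extension $\rho : \ca(\S) \to \B(\K)$ of $\pi|_\S$; then the amplification $\operatorname{id} \otimes \rho$ is a \ucp map on $\M_{2n}(\ca(\S))$ that agrees entrywise with $\tilde\pi$ on $\M_{2n}(\S)$, so the unique extension property of $\tilde\pi$ forces $\operatorname{id} \otimes \rho = \tilde\pi$, whence $\rho = \pi$. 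Finally, because
\[ \pi^{(2n)}(\tilde S) = \begin{pmatrix} 0 & \pi^{(n)}(S) \\ \pi^{(n)}(S)^* & 0 \end{pmatrix} \]
has norm $\|\pi^{(n)}(S)\|$, the chain
\[ \|S\| = \omega(\tilde S) \le \|\tilde\pi(\tilde S)\| = \|\pi^{(n)}(S)\| \le \|S\| \]
collapses to equality, which is what was to be proved.
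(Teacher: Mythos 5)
Your proof is correct, and its overall architecture is the same as the paper's (an argument due to Kleski): produce a pure state at a matrix level by taking an extreme point of the weak-$*$ compact face of norming states, dilate it to a boundary representation via Theorem~\ref{T:puremax}, identify that representation as an amplification $\pi^{(k)}$ by compressing to the range of a matrix unit, and use the easy direction of Hopenwasser's theorem to conclude that $\pi$ is a boundary representation of $\S$. The genuine difference is in the first step. The paper reduces to $T=S^*S$, which lies in $\M_n(\ca(\S))$ but in general \emph{not} in $\M_n(\S)$, and therefore works with a pure state $f_0$ of the C*-algebra $\M_n(\ca(\S))$; one must then see why Theorem~\ref{T:puremax} produces a boundary representation of the operator system $\M_n(\S)$ that still picks up the value $f_0(T)$. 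You instead symmetrize to
\[ \tilde S = \begin{pmatrix} 0 & S\\ S^* & 0\end{pmatrix} \in \M_{2n}(\S), \]
which stays inside the operator system (here self-adjointness of $\S$ is used), note that $\|\tilde S\|\in\sigma(\tilde S)$ by the symmetry of the spectrum, and take your extreme point $\omega$ in the state space of $\M_{2n}(\S)$ itself. Since extreme states of an operator system are exactly the pure \ucp maps into $\bC$, Theorem~\ref{T:puremax} applies verbatim to $\omega$, and the resulting boundary representation automatically satisfies $\langle\tilde\pi(\tilde S)\zeta,\zeta\rangle=\omega(\tilde S)=\|\tilde S\|$ because $\tilde S$ lies in the domain where the dilation compresses to $\omega$. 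The price is doubling the matrix size from $n$ to $2n$, which costs nothing; what it buys is that every object you invoke lives on the operator system rather than on the ambient C*-algebra, so no auxiliary restriction-of-purity or norming issues arise. Two cosmetic points: $\|\tilde S\|$ is a spectral value of $\tilde S$, not necessarily an eigenvalue (this does not affect the existence of the norming state), and the identification of irreducible representations of $\M_{2n}(\ca(\S))$ with amplifications $\operatorname{id}\otimes\pi$ is the same ``standard argument'' via matrix units that the paper cites, rather than a consequence of simplicity of $\M_{2n}(\bC)$ alone.
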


\begin{proof}
It suffices to accomplish this for $T=S^*S$, since then
\[ \|\pi^{(n)}(S)\|^2 = \|\pi^{(n)}(T)\| = \|T\| = \|S\|^2 .\]
It is a standard argument that there is a state $f$ on $\ca(T)$
such that $f(T) = \|T\|$. Extend this by the Hahn-Banach Theorem to
$\M_n(\ca(\S))$ to obtain a state that norms $T$. The set
\[ \{ f \in S(\M_n(\ca(\S))) : f(T) = \|T\| \} \]
is a weak-$*$ compact convex set.  By the Krein-Milman Theorem,
it has an extreme point $f_0$. This is a pure state of $\M_n(\ca(\S))$.

By Theorem~\ref{T:puremax}, there is a boundary representation $\sigma$
of $\M_n(\S)$ that dilates $f_0$. Clearly $\|\sigma(T)\|=\|T\|$.
Another standard argument shows that the representation $\pi$ of $\ca(\S)$ 
obtained by compression to the range of a matrix unit 
satisfies $\sigma \simeq \pi^{(n)}$.
Now the easy direction of Hopenwasser's theorem \cite{Hop} yields that $\pi$
is a boundary representation of $\S$. Indeed, if $\pi|_\S$ has a  
\ucp dilation $\phi$, then $\phi^{(n)}$ is a \ucp dilation of $\sigma|_{\M_n(\S)}$.
Since $\sigma$ has the unique extension property, $\phi^{(n)}=\pi^{(n)}$; and thus $\phi=\pi$.
Therefore $\pi$ is a boundary representation with the desired norming property.
\end{proof}

Now we turn to a second approach based on some interesting ideas of Farenick \cite{Far04}.
This argument is based on matrix states on $\S$ itself.
A \textit{matrix state} is a \ucp map of $\S$ into the $k\times k$ matrices $\M_k$.
Let $S_k(\S) = \UCP(\S,\M_k)$ be the set of all \ucp maps from $\S$ into $\M_k$.
The set of all matrix states is $S(\S) = ( S_k(\S) )_{k\ge1}$.

There is a natural bijective correspondence between $\CP(\S,\M_n)$ and $\CP(\M_n(\S),\bC)$; 
see \cite[Theorem~6.1]{Paulsen}. However this does not yield a correspondence between 
matrix states on $\S$ and states on $\M_n(\S)$. So we do not know a way to deduce
the existence of sufficiently many pure matrix states from the previous argument.
So this second approach is of independent interest.

We begin with an easy observation.

\begin{lem} \label{L:norming}
The set of all matrix states completely norms $\S$; i.e.\
for every $S\in \M_n(\S)$,
\[
 \|S\| = \sup\{ \|\phi_n(S)\| : \phi \in S(\S) \} .
\]
\end{lem}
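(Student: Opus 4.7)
The easy direction $\sup\le \|S\|$ is immediate: every \ucp map is completely contractive (since $\|\phi\|_{cb}=\|\phi(1)\|=1$), so $\|\phi_n(S)\|\le\|S\|$ for every $\phi\in S(\S)$. The real content is the reverse inequality, and the plan is to produce matrix states by compressing a faithful representation of $\ca(\S)$ to finite-dimensional subspaces chosen to preserve the norm of $S$.

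More concretely, fix $\ep>0$ and realize $\ca(\S)$ faithfully in $\B(\H)$, so that $S \in \M_n(\S)\subset \B(\H^n)$ has its operator norm equal to $\|S\|$. By definition of this norm, one can pick a unit vector $v=(v_1,\dots,v_n)\in\H^n$ with $\|Sv\|>\|S\|-\ep$. Let $\K\subset\H$ be the finite-dimensional subspace spanned by the entries $v_1,\dots,v_n$ \emph{together with} the entries $(Sv)_i = \sum_{j=1}^n S_{ij}v_j$ for $i=1,\dots,n$; set $k=\dim\K\le 2n$ and let $P$ be the orthogonal projection of $\H$ onto $\K$. Define
\[
\phi: \S \to \B(\K)\cong \M_k, \qquad \phi(s) = PsP|_\K.
\]
As a compression of the identity representation restricted to $\S$, the map $\phi$ is unital and completely positive, hence a matrix state in $S_k(\S)$.

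To finish, one verifies that this $\phi$ nearly norms $S$. Since each $v_j\in\K$, we have $P^{(n)}v = v$, and since each component $(Sv)_i$ of $Sv$ also lies in $\K$, we have $P^{(n)}Sv = Sv$. Therefore
\[
\phi_n(S)\,v \;=\; P^{(n)} S P^{(n)} v \;=\; P^{(n)} S v \;=\; Sv,
\]
which gives $\|\phi_n(S)\|\ge \|\phi_n(S)v\|=\|Sv\|>\|S\|-\ep$. Letting $\ep\downarrow 0$ yields the reverse inequality.

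There is really no serious obstacle here; the lemma is genuinely elementary. The only points that require a line of care are (i) checking that the compression $\phi$ is completely positive (an easy computation using $P=P^*$ and the fact that $\K$ is invariant under $P$), and (ii) making sure the chosen subspace $\K$ contains \emph{both} $v$ and $Sv$ so that the $2n$-dimensional enlargement does the work for us. The essential idea is simply that finite-dimensional compressions already exhaust the supremum of $\|\phi_n(S)\|$ over all \ucp maps.
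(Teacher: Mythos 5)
Your proof is correct and follows exactly the paper's approach: compress a faithful representation of $\ca(\S)$ to finite-dimensional subspaces, which the paper states in two lines and you carry out in detail (including the useful point that $\K$ must contain the components of both $v$ and $Sv$). No issues.
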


\begin{proof}
Let $\pi$ be a faithful $*$-representation of $\ca(\S)$ on $\H$.
Then $\pi$ is completely isometric. Hence the set of compressions of
$\pi$ to all finite dimensional subspaces of $\H$ also completely norms $\S$.
\end{proof}

Now the issue is to replace the set of all matrix states with the set of pure matrix states.
For this, we need the notions of matrix convexity and matrix extreme points.

A \textit{matrix convex set} in a vector space $V$ is a collection $K = (K_k)$
of subsets $K_k \subset \M_k(V )$ such that $K_k$ contains all elements of the form
\[
 \sum_{i=1}^p \gamma_i^* v_i \gamma_i  
 \qforal v_i \in K_{k_i} ,\ \gamma_i \in \M_{k_i,k}, \text{ such that } \sum_{i=1}^p \gamma_i^* \gamma_i = I_k .
\]
If $S=(S_k)$ is a collection of subsets of $\M_k(V)$, then there is a smallest
closed matrix convex set generated by $S$ called $\cconv(S)$.

A matrix convex combination $v=\sum_{i=1}^p \gamma_i^* v_i \gamma_i$ is \textit{proper} if
each $\gamma_i$ has a right inverse belonging to $\M_{k,k_i}$ , i.e., if $\gamma_i$ is surjective. 
In particular, we must have that $k\ge k_i$.
A point $v \in K_k$ is a \textit{matrix extreme point} if whenever $v$ is a proper matrix convex 
combination of $v_i\in K_{k_i}$ for $1\le i \le p$, then each $k_i=k$ and $v_i=u_ivu_i^*$ for some unitary $u_i\in\M_k$.
In particular, at level $k=1$, matrix extreme points are just extreme points.
Webster and Winkler \cite[Theorem 4.3]{WebWink} prove a Krein-Milman Theorem for 
matrix convex sets stating that a compact matrix convex set is the closed matrix convex hull
of its matrix extreme points.

The matrix state space $S(\S) = (S_k(\S))_{k\ge1}$ of an operator system $\S$ forms a 
BW-compact matrix convex set.
A result of Farenick \cite[Theorem B]{Far00} shows that a matrix state is pure if and only if it 
is a matrix extreme point of $S(\S)$.
Thus every matrix state is in the BW-closure of the matrix convex combinations of the
pure matrix states.

In \cite{Far04}, Farenick provides a simpler proof which is independent of these results.
His argument starts with the observation that the extreme rays of the \cp maps of $\S$ into $\M_n$
are precisely the pure \cp maps. Then he uses Choquet's Theorem on convex cones
to show that the convex hull of the pure \cp maps is BW-dense in the whole cone.
Now a normalization argument shows directly that the C*-convex combinations
of pure matrix states are BW-dense in the set of all matrix states.

\begin{lem} \label{L:purenorming}
The set of all pure matrix states completely norms $\S$; i.e.\
for every $S\in \M_n(\S)$,
\[
 \|S\| = \sup\{ \|\phi_n(S)\| : \phi \in S(\S), \ \phi \text{ pure} \} .
\]
\end{lem}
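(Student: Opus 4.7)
The plan is to combine Lemma~\ref{L:norming} with the fact (just recalled above the statement) that every matrix state is the BW-limit of matrix convex combinations of pure matrix states. The inequality $\sup \le \|S\|$ is immediate, since pure matrix states are unital completely positive and hence completely contractive. For the reverse inequality, I would fix $\epsilon > 0$ and use Lemma~\ref{L:norming} to select a matrix state $\phi \in S_k(\S)$ (for some $k$) with $\|\phi_n(S)\| \ge \|S\| - \epsilon$; the task then becomes producing a pure matrix state $\psi$ with $\|\psi_n(S)\| \ge \|\phi_n(S)\| - \epsilon$.

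To do so, I would write $\phi$ as a BW-limit of matrix convex combinations $\phi_\alpha = \sum_{i=1}^{p_\alpha} \gamma_i^* \psi_i \gamma_i$, where each $\psi_i \in S_{k_i}(\S)$ is pure, $\gamma_i \in \M_{k_i,k}$, and $\sum_i \gamma_i^* \gamma_i = I_k$. Because of the last condition, each $\phi_\alpha$ still lies in $S_k(\S)$, and since $\M_{kn}$ is finite-dimensional, BW-convergence in $S_k(\S)$ evaluated at $S$ yields norm convergence $(\phi_\alpha)_n(S) \to \phi_n(S)$. Hence for $\alpha$ far enough along one has $\|(\phi_\alpha)_n(S)\| \ge \|\phi_n(S)\| - \epsilon$.

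The heart of the argument is the estimate
\[
 \|(\phi_\alpha)_n(S)\| \le \max_i \|(\psi_i)_n(S)\|.
\]
I would prove it by stacking the $\gamma_i$ column-wise into a map $V \colon \bC^k \to \bigoplus_i \bC^{k_i}$; the relation $\sum_i \gamma_i^*\gamma_i = I_k$ says precisely that $V$ is an isometry, and $\phi_\alpha = V^*\bigl(\bigoplus_i \psi_i\bigr)V$. Tensoring by $I_n$ preserves the isometry property of $V$, while $\bigl(\bigoplus_i \psi_i\bigr)_n(S)$ is block-diagonal of norm $\max_i \|(\psi_i)_n(S)\|$, giving the bound. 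Thus for $\alpha$ large enough, some pure matrix state $\psi_i$ in the combination satisfies $\|(\psi_i)_n(S)\| \ge \|S\| - 2\epsilon$, and letting $\epsilon \to 0$ finishes the proof.

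The only real subtlety, and essentially the main place to be careful, is confirming that in the matrix Krein--Milman statement the approximating combinations can be taken to land in the same level $S_k(\S)$ as $\phi$; this is built into the normalization $\sum_i \gamma_i^*\gamma_i = I_k$. Everything else reduces to the isometry trick above and the fact that weak-$*$ and norm topologies agree on the finite-dimensional target $\M_{kn}$.
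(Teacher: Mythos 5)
Your proposal is correct and follows essentially the same route as the paper: both reduce to the key estimate $\|(\sum_i \gamma_i^*\psi_i\gamma_i)_n(S)\| \le \max_i \|(\psi_i)_n(S)\|$ via the stacked-isometry factorization $\phi = \gamma^*(\bigoplus_i \psi_i)\gamma$, and then pass to the BW-closure using Farenick's density of matrix convex combinations of pure matrix states together with Lemma~\ref{L:norming}. Your $\epsilon$-bookkeeping and the remark about the combinations staying in level $S_k(\S)$ just make explicit the continuity step that the paper dispatches in one line.
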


\begin{proof}
It suffices to show that the supremum over all matrix convex combinations
of pure matrix states is no larger than the supremum over pure matrix states.
This inequality will then extend to the BW-closure by continuity.
Thus by the remarks preceding the lemma, this will be the supremum over
all matrix states. Hence the result follows from Lemma~\ref{L:norming}.

Suppose that $\phi\in S_k(\S)$ is a matrix convex combination of pure states $\phi_i \in S_{k_i}(\S)$.
So there are linear maps $\gamma_i\in \M_{k_i,k}$ such that
\[
 \phi = \sum_{i=1}^p \gamma_i^* \phi_i \gamma_i  
 \qand
 \sum_{i=1}^p \gamma_i^* \gamma_i = I_k .
\]
Then $\psi := \phi_1 \oplus \dots \oplus \phi_p$ belongs to $S_K(\S)$ where $K=\sum_{i=1}^p k_i$.
We can factor $\phi$ as
\[
 \phi = 
 \begin{bmatrix}\gamma_1\\ \gamma_2\\ \vdots\\ \gamma_p\end{bmatrix}^*
 \begin{bmatrix} 
 \phi_1 & 0 & \dots & 0\\
 0 & \phi_2 & \ddots & 0\\
 \vdots &\ddots&\ddots&\vdots\\
 0&\dots&0&\phi_p
 \end{bmatrix}
 \begin{bmatrix}\gamma_1\\ \gamma_2\\ \vdots\\ \gamma_p\end{bmatrix}
 = \gamma^* \psi \gamma,
\]
where $\gamma = (\gamma_1, \gamma_2,\ldots,\gamma_p)^T$.
Observe that 
\[ \gamma^*\gamma = \sum_{i=1}^p \gamma_i^* \gamma_i = I_k .\]
Hence $\gamma$ is an isometry.

Let $S \in \M_n(\S)$. Then
\begin{align*}
 \| \phi_n(S)\|  &= \| (\gamma \otimes I_n)^* \psi_n(S) (\gamma\otimes I_n) \| \\
 &\le \left\| 
 \begin{bmatrix} 
 (\phi_1)_n(S) & 0 & \dots & 0\\
 0 & (\phi_2)_n(S) & \ddots & 0\\
 \vdots &\ddots&\ddots&\vdots\\
 0&\dots&0&(\phi_p)_n(S)
 \end{bmatrix}
 \right\| \\
 &= \max_{1 \le i \le p} \| (\phi_i)_n(S) \| .
\end{align*}
The right hand side is a maximum over pure states, as desired.
\end{proof}

We can now combine all of the ingredients to obtain the main result.
\begin{thm} \label{T:sufficient}
Let $\S$ be an operator system. Then $\S$ is completely normed by its boundary representations.
Hence the direct sum of all boundary representations yields a completely isometric map 
$\iota:\S \to \B(\K)$, so that $(\iota,\ca(\iota(\S)))$ is the C*-envelope of $\S$. 
$($Here, the direct sum is taken over a set of fixed Hilbert spaces of dimensions
ranging from $1$ up to $\aleph_0 \dim \S .)$
\end{thm}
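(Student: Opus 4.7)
The plan is to form the direct sum $\iota=\bigoplus_\pi\pi|_\S$ over all boundary representations $\pi$ of $\S$ (once suitably indexed so that this sum is over a set), and then to show that $\iota$ is completely isometric and has the unique extension property. Complete isometry will follow from Theorem~\ref{T:norm}, and UEP will follow from the fact that each summand has UEP together with a Schwarz-inequality argument. Combined with the Dritschel--McCullough result recorded in the Background section, this will identify $\ca(\iota(\S))$ with $\cenv(\S)$.

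First I would handle the set-theoretic bookkeeping in the parenthetical of the statement. A boundary representation $\pi$ is irreducible, so every nonzero vector is cyclic, and the underlying Hilbert space therefore has density character at most that of $\ca(\S)$, hence at most $\aleph_0\dim\S$. Fixing one representative Hilbert space of each dimension $\kappa$ with $1\le\kappa\le\aleph_0\dim\S$, every boundary representation is, up to unitary equivalence, realized on one of these representatives, so the collection of such boundary representations forms a set.

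Complete isometry of $\iota$ is then immediate: for any $S\in\M_n(\S)$, Theorem~\ref{T:norm} produces a boundary representation $\pi$ with $\|\pi^{(n)}(S)\|=\|S\|$, and this norm is attained on a single block of $\iota^{(n)}(S)$, yielding $\|\iota^{(n)}(S)\|\ge\|S\|$; the reverse inequality holds because $\iota$ is UCP, hence completely contractive.

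For the universal property I would verify that $\iota$ has UEP. Given any UCP extension $\Phi\colon\ca(\S)\to\B(\K)$ of $\iota$, its compression to each summand $\H_\pi$ is a UCP extension of $\pi|_\S$, which must coincide with $\pi$ by UEP of $\pi$. Since $\pi$ is multiplicative, a routine Schwarz argument then forces each $\H_\pi$ to be reducing for $\Phi(\ca(\S))$, so $\Phi=\bigoplus_\pi\pi$ is the unique UCP extension of $\iota$ and is in particular a $*$-representation. The Dritschel--McCullough theorem recorded in the Background now shows that this extension factors through $\cenv(\S)$, and since $\iota$ is completely isometric the factoring is a $*$-isomorphism of $\ca(\iota(\S))$ onto $\cenv(\S)$. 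With Theorems~\ref{T:puremax} and~\ref{T:norm} already in hand there is no deep obstacle here; the only slightly delicate point is the set-theoretic bookkeeping in the first step.
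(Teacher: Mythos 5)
Your proof is correct and follows essentially the same route as the paper: boundary representations completely norm $\S$ (you cite Theorem~\ref{T:norm}, while the paper's proof cites Lemma~\ref{L:purenorming} together with Theorem~\ref{T:puremax}, but both are norming routes the paper itself supplies), the same set-theoretic bookkeeping with fixed representative Hilbert spaces, and the observation that the direct sum inherits maximality, equivalently the unique extension property, from its summands before invoking Dritschel--McCullough. Your Schwarz-inequality verification that each $\H_\pi$ reduces any u.c.p.\ extension is just the standard argument identifying the unique extension property with maximality, which the paper invokes implicitly by asserting that any dilation of the direct sum splits off each boundary representation as a direct summand.
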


\begin{proof}
By Lemma~\ref{L:purenorming}, the pure matrix states completely norm $\S$.
By Theorem~\ref{T:puremax}, each of these pure matrix states can be dilated 
to a boundary representation of $\S$. Clearly this implies that the collection of 
all boundary representations completely norms $\S$.
To get a set, we need to take the precaution to fix a set of Hilbert spaces
of the proper dimensions to accomodate irreducible representations of $\ca(\S)$.
This dimension is bounded above by $\aleph_0 \dim \S$.
The direct sum $\pi$ of this set of boundary representations is then completely isometric on $\S$.
Each boundary representation is maximal, and thus any dilation of $\pi$ must leave 
each boundary representation as a direct summand.  
Hence $\pi$ is a direct summand of its dilation, and therefore is a maximal \ucp map. 
By the arguments of Dritschel and McCullough \cite{DritMcCull} or Arveson \cite{Arv2008},
the C*-envelope of $\S$ is the C*-algebra generated by this representation.
\end{proof}

Earlier remarks yield the corresponding result for operator algebras.

\begin{cor} \label{C:sufficient}
Let $\A$ be a unital operator algebra. Then $\A$ is completely normed by its boundary representations.
Hence the direct sum of all boundary representations yields a completely isometric map $\iota:\A \to \B(\K)$, so that
$(\iota,\ca(\iota(\A)))$ is the C*-envelope of $\A$.  
$($Here, the direct sum is taken over a set of fixed Hilbert spaces of dimensions
ranging from $1$ up to $\aleph_0 \dim \S .)$
\end{cor}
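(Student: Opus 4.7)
The plan is to reduce the operator algebra case to the operator system case by passing to $\S = \overline{\A + \A^*} \subset \ca(\A)$, which is an operator system generating $\ca(\A) = \ca(\S)$. Theorem~\ref{T:sufficient} applied to $\S$ produces a set of boundary representations of $\S$ whose direct sum $\iota_\S : \S \to \B(\K)$ is completely isometric and satisfies $\ca(\iota_\S(\S)) = \cenv(\S)$. The goal is to show that the restrictions of these maps to $\A$ are precisely the boundary representations of $\A$, and that they suffice to construct $\cenv(\A)$.

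First I would verify the bijection between boundary representations of $\A$ and boundary representations of $\S$. Given a completely contractive unital representation $\rho : \A \to \B(\H)$, the paper already recalls that $\rho$ extends uniquely to a \ucp map $\tilde\rho : \S \to \B(\H)$, and that \ucp extensions of $\rho$ to $\ca(\A)$ correspond bijectively to \ucp extensions of $\tilde\rho$ to $\ca(\S)$. Hence $\rho$ has the unique extension property if and only if $\tilde\rho$ does; irreducibility is the same condition on both sides. Consequently, the boundary representations of $\A$ are exactly the restrictions to $\A$ of boundary representations of $\S$.

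Next I would assemble the conclusion. Since $\A \subset \S$ and $\iota_\S$ is completely isometric on $\S$, the restriction $\iota := \iota_\S|_\A : \A \to \B(\K)$ is completely isometric. By the previous paragraph, $\iota$ is the direct sum of the boundary representations of $\A$ (restricted from those of $\S$), so the boundary representations of $\A$ completely norm $\A$. Moreover, since $\iota_\S$ is the restriction of a $*$-representation $\pi$ of $\ca(\S) = \ca(\A)$, the C*-algebra $\ca(\iota(\A)) = \pi(\ca(\A)) = \pi(\ca(\S)) = \ca(\iota_\S(\S)) = \cenv(\S)$, and the universal property of the C*-envelope (together with the equality $\ca(\A) = \ca(\S)$) gives $\cenv(\A) = \cenv(\S)$. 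Thus $(\iota, \ca(\iota(\A)))$ is the C*-envelope of $\A$.

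There is no real obstacle; the only step requiring care is the identification of boundary representations across the $\A$/$\S$ correspondence, which is essentially a bookkeeping exercise using the Arveson extension facts already recalled in the Background section. Everything else is immediate from Theorem~\ref{T:sufficient}.
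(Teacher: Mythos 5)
Your proposal is correct and takes essentially the same route the paper intends: the paper's entire proof of this corollary is the sentence ``Earlier remarks yield the corresponding result for operator algebras,'' referring to the Background discussion of the correspondence $\rho \leftrightarrow \tilde\rho$ between unital \cc representations of $\A$ and \ucp maps on $\S=\ol{\A+\A^*}$ (with $\ca(\A)=\ca(\S)$), which you have simply written out in detail. The one step you state without justification --- that the universal properties yield $\cenv(\A)=\cenv(\S)$, which rests on the standard fact that a unital complete isometry of $\A$ extends to a complete order isomorphism of $\S$ --- is likewise taken for granted by the paper, and could alternatively be bypassed by noting that $\iota$ is a completely isometric \emph{maximal} representation of $\A$ and invoking Dritschel--McCullough directly, exactly as in the proof of Theorem~\ref{T:sufficient}.
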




\begin{thebibliography}{99}

\bibitem{Agler} J. Agler,
\textit{An abstract approach to model theory},
in Surveys of some recent results in operator theory, Vol. II, pp.\ 1--23, 
Longman Sci. Tech., Harlow, 1988.

\bibitem{Arv1969} W. Arveson,
\textit{Subalgebras of C*-algebras},
Acta Math.\ \textbf{123} (1969), 141--224.

\bibitem{Arv1972} W. Arveson, 
\textit{Subalgebras of C*-algebras II}, 
Acta Math.\ \textbf{128} (1972), 271--308.

\bibitem{Arv2008}W. Arveson,
\textit{The noncommutative Choquet boundary},
J. Amer.\ Math.\ Soc. \textbf{21} (2008), 1065--1084.

\bibitem{BRS} D. Blecher, Z.J. Ruan and A. Sinclair,
\textit{A characterization of operator algebras},
J. Funct.\ Anal.\ \textbf{89} (1990), 188--201.

\bibitem{ChoiEffros} M.D. Choi and E. Effros,
\textit{Injectivity and operator spaces},
J. Funct.\ Anal.\ \textbf{24} (1977), 156--209.

\bibitem{DritMcCull} M. Dritschel and S. McCullough,
\textit{Boundary representations for families of representations of operator algebras and spaces},
J. Operator Theory \textbf{53} (2005), 159--167.

\bibitem{Far00} D. Farenick,
\textit{Extremal matrix states on operator systems},
J. London Math.\ Soc. \textbf{61}(3) (2000), 885--892. 

\bibitem{Far04} D. Farenick, 
\textit{Pure matrix states on operator systems},
Linear Algebra Appl. \textbf{393} (2004), 149--173. 

\bibitem{Ham} M. Hamana,
\textit{Injective envelopes of operator systems},
Publ.\ Res.\ Inst.\ Math.\ Sci.\ \textbf{15} (1979), 773-785.

\bibitem{Hop}  A. Hopenwasser, 
\textit{Boundary representations on C?-algebras with matrix units},
Trans.\ Amer.\ Math.\ Soc. \textbf{177} (1973), 483--490.

\bibitem{Kleski} C. Kleski,
\textit{Boundary representations and pure completely positive maps},
J. Operator Theory, to appear.

\bibitem{Kleski_email} C. Kleski, private communication, May 8, 2013.

\bibitem{MuhlySolel} P. Muhly and B. Solel,
\textit{An algebraic characterization of boundary representations},
Nonselfadjoint operator algebras, operator theory, and related topics, 189--196, 
Oper.\ Theory Adv.\ Appl.\ \textbf{104}, 
Birkhauser, Basel, 1998. 

\bibitem{Paulsen} V. Paulsen,
\textit{Completely bounded maps and operator algebras}, 
Cambridge Studies in Advanced Mathematics \textbf{78}, 
Cambridge University Press, Cambridge, 2002.

\bibitem{SzNF} B. Sz.~Nagy, C. Foia\c{s}, H. Bercovici and L. Kerchy, 
{\it Harmonic analysis of operators on Hilbert space}, 2nd ed.,
Springer Verlag, New York, 2010.

\bibitem{WebWink} C. Webster and S. Winkler,
\textit{The Krein-Milman theorem in operator convexity},
Trans.\ Amer.\ Math.\ Soc. \textbf{351} (1999), 307--322.

\end{thebibliography}
\end{document}